\documentclass{amsart}
\usepackage[utf8]{inputenc}
\usepackage{amsfonts}
\usepackage{amsmath}
\usepackage{amssymb}
\usepackage{amsthm}
\usepackage{hyperref}
\usepackage[english]{babel}
\usepackage{url}
\usepackage{verbatim,listings}
\usepackage{tikz,tikz-cd}
\usepackage[margin=1.0in]{geometry}

\newtheorem{thm}{Theorem}[section]
\newtheorem{lem}[thm]{Lemma}

\theoremstyle{definition}

\theoremstyle{remark}
\newtheorem*{rem}{Remark}

\newcommand{\divides}{\,\Big{|}\,}

\title{families of congruences for partitions with $k$-colored odd parts}
\author{Samuel Wilson\\ University of Tennessee - Knoxville} 
\date{}

\begin{document}

\begin{abstract}
    The study of integer partitions and their congruences dates back to 1919 when Ramanujan discovered his famous congruences for the partition function, $p(n)$. Since then, many other kinds of partition functions have been discovered, as well as their respective congruences. Recently, Hirschorn and Sellers have consider partitions in which the odd parts may appear in $k$ colors and the even parts are restricted to at most one color. It turns out that these partitions exhibit fascinating families of congruences. In this paper, we look at a set of congruences that give rise to infinite families modulo 3. We also give some questions at the end that could aid further research into these partitions.
\end{abstract}

\maketitle

\section{Introduction}
Integer partitions and their congruences have been studied in Number Theory and Combinatorics for over a century. An integer partition (or simply, a partition) of $n$, is a non-increasing sequence of positive integers $\lambda_1 \geq \lambda_2\geq\ldots\geq\lambda_r \geq 1$  which sum to $n$. We call each $\lambda_i$ a \emph{part} of the partition. We denote the number of partitions of $n$ as $p(n)$, whose generating function is given by
$$
\sum_{n\geq 0}p(n)q^n = \prod_{n\geq 1}\frac{1}{(1-q^n)} = \frac{1}{f_1},
$$
where $q := e^{2\pi i z}$ and 
$$
f_k := \prod_{n\ge1} (1-q^{kn}).
$$
In recent years, much attention has been given to multicolored partitions. We denote the number of $k$-colored partitions of $n$ as $p_k(n)$, which gives the number of ways $n$ can be written as a partition in which each part can be given one of $k$-colors. The generating function for $p_k(n)$ is given by 
$$
\sum_{n\geq 0}p_k(n)q^n = \prod_{n\geq 1}\frac{1}{(1-q^n)^k} = \frac{1}{f_1^k}.
$$

Recently, there has been much interest in partitions in which the even parts appear in at most one color, and the odd parts may appear in at most $k$-colors. These have been studied by the likes of Sellers, Hirschhorn, Thejitha, and Fathima (see
\cite{hirschhorn2025familycongruencesmodulo7},\cite{sellers2025elementaryproofsgeneralizationsrecent},\cite{thejitha2025arithmeticpropertiespartitions1colored}, and \cite{thejitha2026arithmeticpropertiescoloredpartitions}). The generating function for these partitions is given by 
$$
\sum_{n\geq0}a_k(n)q^n = \prod_{n\geq 1}\frac{(1-q^{2n})^{k-1}}{(1-q^n)^k} = \frac{f_2^{k-1}}{f_1^k}.
$$
Multiple congruences have been found for the partition numbers $a_k(n)$. For example, Sellers proved the following lemma in \cite{sellers2025elementaryproofsgeneralizationsrecent} that we will use later.
\begin{lem}\label{sellerslemma}
    For $j \geq 0, 0\leq t \leq 8,$ and all $n \geq 0$,
    $$
    a_{27j+3t+2}(27n+(18+t)) \equiv 0 \pmod{3}
    $$
\end{lem}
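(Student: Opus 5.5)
The plan is to reduce everything modulo $3$ to a single eta-quotient and then carry out repeated $3$-dissections. Put $k=27j+3t+2$. Using $f_a^3\equiv f_{3a}\pmod 3$, the factor $f_2^{27j}/f_1^{27j}\equiv f_{54}^{j}/f_{27}^{j}$ is a power series in $q^{27}$. Multiplying by it leaves the coefficients in any residue class modulo $27$ as combinations of coefficients of the remaining factor in that same class. So it suffices to treat $j=0$, i.e.\ $k=3t+2$ with $0\le t\le 8$.

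For this case I would write
$$
\frac{f_2^{3t+1}}{f_1^{3t+2}}=\Big(\frac{f_2^{3}}{f_1^{3}}\Big)^{t+1}\frac{f_1}{f_2^{2}}\equiv \Big(\frac{f_6}{f_3}\Big)^{t+1}\frac{f_1f_2}{f_2^{3}}\equiv \frac{f_6^{t}}{f_3^{t+1}}\, f_1f_2 \pmod 3 .
$$
Next I would use the classical $3$-dissection
$$
f_1f_2=\frac{f_6f_9^4}{f_3f_{18}^2}-qf_9f_{18}-2q^2\frac{f_3f_{18}^4}{f_6f_9^2},
$$
where the last coefficient is $\equiv q^2$ modulo $3$.

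The target exponent $27n+18+t$ is congruent to $t \pmod 3$. I would therefore keep the component with $q$-exponent $\equiv t\pmod 3$ of $f_6^t f_3^{-t-1}\cdot f_1f_2$, replace $q^3$ by $q$, and obtain a new eta-quotient in $f_1,f_2,f_3,f_6$ (with $t=0,1,2$ picking the first, second or third term). I would then repeat the procedure twice more. At each stage I reduce powers modulo $3$ via $f_a^3\equiv f_{3a}$ so as to isolate a factor $f_1f_2$ (or $f_1^{-1}f_2^{-1}$, handled by $f_1f_2\cdot f_1^{-3}f_2^{-3}\equiv f_1f_2/(f_3f_6)$) times a function of $q^3$. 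I then dissect again and select the residue dictated by the next base-$3$ digit of $18+t$, namely the digits of $t$ and then the leading digit $2$ coming from $18=2\cdot 9$.

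The claim is that, after the third extraction, the surviving component is $\equiv 0 \pmod 3$. The expected mechanism is that the needed term either carries the coefficient $3$ (through combinations such as $1+(-2)$, or through a cross-term that is $3$ times something) or cancels between the pieces.

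The main obstacle is this bookkeeping across the nine values of $t$. After the first two extractions the power of $f_6/f_3$ shifts, so the quotient entering the last dissection depends on $t$, and one has to check that the residue-$2$ slice vanishes modulo $3$ in every case. To organize this, I would first try to show that after extracting the digits of $t$, the generating function always becomes, modulo $3$, of the form $F(q^3)\cdot f_1f_2$ or $F(q^3)\cdot \psi(q)$-type with no $q^{3m+2}$ term. For $\psi(q)=\sum q^{m(m+1)/2}$, this holds because $m(m+1)/2\not\equiv 2\pmod 3$. If the $f_1f_2$ form arises instead, I would instead rewrite $f_1f_2\equiv f_1^{-2}f_2^{-2}f_3f_6$ and use $f_2^2/f_1=\psi(q)$ together with the companion identity for $f_1^2/f_2$ (whose exponents $m^2$ also avoid $2 \bmod 3$). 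The goal is to land on a product whose $q^{3m+2}$ part visibly vanishes modulo $3$.
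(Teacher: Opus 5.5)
The paper does not prove this lemma; it quotes it from Sellers. So your argument has to stand on its own.

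Your preliminary steps are all correct:
\begin{itemize}
\item $f_2^{27j}/f_1^{27j}\equiv f_{54}^j/f_{27}^j$ removes $j$;
\item $\frac{f_2^{3t+1}}{f_1^{3t+2}}\equiv \frac{f_6^t}{f_3^{t+1}}f_1f_2 \pmod 3$;
\item the $3$-dissection of $f_1f_2$, with $-2\equiv 1$, is right.
\end{itemize}
The gap is that the only substantive step is never carried out. That step is showing the final residue-$2$ slice vanishes for all nine $t$. You state it as ``the claim'' and support it with a heuristic that is partly wrong. The vanishing is not a cancellation or a hidden factor $3$; it is lacunarity. Also, a form $F(q^3)f_1f_2$ does \emph{not} lack $q^{3m+2}$ terms: its residue-$2$ part is $q^2F(q^3)f_3f_{18}^4/(f_6f_9^2)\not\equiv 0$. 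Rewriting $f_1f_2\equiv f_3f_6/(f_1^2f_2^2)$ does not turn it into a theta series. So your argument works only if that form never appears after two extractions, and you did not check this.

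It does close, along the lines of your last paragraph. Write $t=3u+v$ with $u,v\in\{0,1,2\}$, and write your dissection as $f_1f_2\equiv\sum_{w=0}^{2}q^wD_w(q^3)$, where
\[
D_0=\frac{f_2f_3^4}{f_1f_6^2},\qquad D_1=-f_3f_6,\qquad D_2=\frac{f_1f_6^4}{f_2f_3^2}.
\]
The key observation is that $D_w=\frac{f_2^{1-w}}{f_1^{1-w}}E_w$, where $E_0=f_3^4/f_6^2$, $E_1=-f_3f_6$, $E_2=f_6^4/f_3^2$ are series in $q^3$. Since $t-v=3u$, this gives
\[
\sum_{n\ge0}a_{3t+2}(3n+v)q^n\equiv\frac{f_2^{t}}{f_1^{t+1}}D_v=E_v\,\frac{f_2^{3u+1}}{f_1^{3u+2}}=E_v\sum_{n\ge0}a_{3u+2}(n)q^n\pmod 3.
\]
So stripping the last base-$3$ digit of $t$ just multiplies by a series in $q^3$. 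Apply the same identity to $a_{3u+2}$, whose index has the single digit $u$, and use that $E_v$ is a series in $q^3$. This gives
\[
\sum_{n\ge0}a_{3t+2}(9n+3u+v)q^n\equiv E_u\cdot\widetilde{E}_v\,\frac{f_2}{f_1^2}\pmod 3,
\]
where $\widetilde{E}_v$ is $E_v$ with $q^3$ replaced by $q$. For $v=0,1,2$ respectively, the product $\widetilde{E}_v f_2/f_1^2$ equals
\[
\frac{f_1^2}{f_2}=\varphi(-q),\qquad -\frac{f_2^2}{f_1}=-\psi(q),\qquad \frac{f_2^5}{f_1^4}\equiv\frac{f_6}{f_3}\,\psi(q).
\]
Now $m^2$ is never $\equiv 2\pmod 3$. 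Neither is $m(m+1)/2$, because $8\cdot\frac{m(m+1)}{2}+1=(2m+1)^2$. Hence the coefficient of $q^{3n+2}$ vanishes modulo $3$, and this is exactly $a_{3t+2}(27n+18+t)\equiv0\pmod 3$.

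So the $\varphi/\psi$ mechanism you named in your last paragraph is the right one, and $F(q^3)f_1f_2$ never survives to the last stage. What was missing was carrying out these two extractions rather than asserting their outcome.
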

The utility of this lemma lies in its ability to provide base cases for infinite families of congruences mod 3. The following example of which was proven by Thejitha and Fathima in \cite{thejitha2026arithmeticpropertiescoloredpartitions}.
\begin{thm}\label{mehjinlemma}
    For all $n,k\geq0$,
    $$
    a_5\left(3^{2k+3}n+\frac{153(3^{2k})-1}{8}\right) \equiv 0 \pmod{3}
    $$
\end{thm}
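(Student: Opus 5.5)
The plan is induction on $k$, with the base case supplied by Lemma~\ref{sellerslemma} and the inductive step reduced to an internal congruence for $a_5$ along the progression $9m+1$. Write $N_k=\frac{153\cdot 9^{k}-1}{8}$. For $k=0$ we have $3^{3}n+N_0=27n+19$. This is exactly Lemma~\ref{sellerslemma} with $j=0$ and $t=1$, since then $27j+3t+2=5$ and $18+t=19$. For the inductive step, note that $N_{k+1}=9N_k+1$, so
$$
3^{2k+5}n+N_{k+1}=9\bigl(3^{2k+3}n+N_k\bigr)+1 .
$$
It therefore suffices to prove a congruence of the shape $a_5(9m+1)\equiv \pm a_5(m)\pmod 3$. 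This is needed at least for all $m\equiv 19 \pmod{27}$, since $N_k\equiv 19\pmod{27}$ for every $k$. (Modulo $3$ it is enough to know that $a_5(9m+1)$ vanishes whenever $a_5(m)$ does on that class.)

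To get at $a_5(9m+1)$, I would use $f_1^3\equiv f_3$ and $f_2^3\equiv f_6 \pmod 3$ to rewrite
$$
\sum_{n\ge0}a_5(n)q^n=\frac{f_2^4}{f_1^5}\equiv \frac{f_1f_2f_6}{f_3^2}\pmod 3 ,
$$
and then apply the classical $3$-dissection
$$
f_1f_2=\frac{f_6f_9^4}{f_3f_{18}^2}-qf_9f_{18}-2q^2\frac{f_3f_{18}^4}{f_6f_9^2}.
$$
Extracting the terms $q^{3n+1}$ and reducing with $f_3/f_1^2\equiv f_1$ gives
$$
\sum a_5(3n+1)q^n\equiv -f_1f_2f_6 \pmod 3 .
$$
Applying the same dissection once more and keeping the $q^{3n}$ terms gives
$$
\sum a_5(9n+1)q^n\equiv -\frac{f_2^2f_3^4}{f_1f_6^2}\pmod 3 .
$$

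The main obstacle is comparing this series with $f_2^4/f_1^5\equiv f_1f_2f_6/f_3^2$. They are not visibly equal modulo $3$, so the congruence will only hold on the relevant progression. I would therefore continue dissecting both sides until they can be compared on the class $m\equiv 19\pmod{27}$. On the left I would use the known $3$-dissection
$$
\frac{f_2^2}{f_1}=\frac{f_6f_9^2}{f_3f_{18}}+q\frac{f_{18}^2}{f_9}
$$
together with the dissection of $f_1f_2$. On the right I would perform the $27n+19$ extraction of $f_1f_2f_6/f_3^2$ that underlies Lemma~\ref{sellerslemma}. The goal is to show that both extracted series equal the same eta-quotient up to sign, or that both vanish modulo $3$. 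If a direct match fails, I would try a two-step recurrence instead, relating $a_5(81m+10)$ to $a_5(m)$ and using $N_{k+1}=81N_{k-1}+10$ with base cases $k=0,1$. The bookkeeping of which dissection components survive modulo $3$ at each stage is where I expect the real work to lie.
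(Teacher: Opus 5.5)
Your induction matches the paper's route. The statement is the case $\alpha=1$ ($j=0$, $t=1$) of Theorem~\ref{mainthm}, proved through Lemma~\ref{mainlemma}, with the base case from Lemma~\ref{sellerslemma} and the recursion $N_{k+1}=9N_k+1$. However, the inductive step is never proved, and it carries all of the content: you stop at ``continue dissecting'' and hedge with a fallback.

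The residue class you aim for is also wrong. It is not true that $N_k\equiv 19\pmod{27}$ for every $k$; already $N_1=172\equiv 10\pmod{27}$. Since $9N+1\equiv 10\pmod{27}$ whenever $N\equiv 1\pmod 3$, in fact $N_k\equiv 10\pmod{27}$ for all $k\ge 1$. So a relation between $a_5(9m+1)$ and $a_5(m)$ proved only for $m\equiv 19\pmod{27}$ (or your ``both vanish'' alternative there) takes you from $k=0$ to $k=1$ and no further. You need the relation on a class containing every $3^{2k+3}n+N_k$. The paper uses $m\equiv 1\pmod 9$, i.e.\ $a_5(9n+1)\equiv a_5(81n+10)\pmod 3$. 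It obtains this from the eta-quotients $g_1,g_2$ of Lemma~\ref{neccessarymodforms}, the operators $T_3^2$ and $T_3^4$, and a computer check up to the Sturm bound.

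Your dissection approach closes the gap with one more line, and it is more elementary. Your computations so far are correct. Write $\sum a_5(9n+1)q^n\equiv -\frac{f_3^4}{f_6^2}\cdot\frac{f_2^2}{f_1}$ and insert the $3$-dissection of $f_2^2/f_1$ that you quote; note that $f_3^4/f_6^2$ is a series in $q^3$.

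\begin{itemize}
\item There is no $q^{3n+2}$ part, so $a_5(27n+19)\equiv 0\pmod 3$. You do not even need Lemma~\ref{sellerslemma} for the base case.
\item The $q^{3n+1}$ part gives $\sum a_5(27n+10)q^n\equiv -\frac{f_1^4f_6^2}{f_2^2f_3}\equiv -f_1f_2^4\equiv -f_1f_2f_6\pmod 3$. This is exactly your formula for $\sum a_5(3n+1)q^n$.
\end{itemize}

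Hence $a_5(9m+1)\equiv a_5(m)\pmod 3$ for \emph{all} $m\equiv 1\pmod 3$. This covers every $N_k$ and completes the induction without any machine verification. The paper's modular-forms method, by contrast, handles all the listed values of $\alpha$ at once, at the cost of a Sturm-bound computation.
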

Proofs similar to Theorem \ref{mehjinlemma} usually have the same structure. Namely, the infinite families arise from two phenomena: a base congruence (for example, Lemma \ref{sellerslemma}) and an internal congruence that ``lifts" the base congruence to higher powers of the modulus. This is the technique we will be employing in order to prove our main theorem, which generalizes the results of Thejitha, Fathima, and Sellers.
\begin{thm}\label{mainthm}
    Let $\alpha \in \{1,3,4,6,7,9,11,12,14,15,20,22,23,27,28,30,31,36,38,39,46,47,54,55,63\}$ 
    and write $\alpha= 9j+t$, $0\leq t \leq 8$. Then for all $n,k \geq 0$,
    $$
    a_{3\alpha+2}\left(3^{2k+3}n+9^k(18+t)+\alpha\frac{9^k-1}{8}\right) \equiv 0 \pmod{3}.
    $$
\end{thm}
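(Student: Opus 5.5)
The plan is induction on $k$: the case $k=0$ is Lemma \ref{sellerslemma}, and an \emph{internal congruence} for fixed $\alpha$ lifts the statement from $k$ to $k+1$.

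\textbf{Base case.} With $\alpha=9j+t$ we have $3\alpha+2=27j+3t+2$. At $k=0$ the argument is $27n+(18+t)$, since $9^0=1$ and $\alpha(9^0-1)/8=0$. So the case $k=0$ is exactly Lemma \ref{sellerslemma}.

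\textbf{Reduction to one internal congruence.} Write
$$N_k(n)=3^{2k+3}n+9^k(18+t)+\alpha\frac{9^k-1}{8}.$$
A direct check gives $N_{k+1}(n)=9N_k(n)+\alpha$, because
$$9\cdot\alpha\frac{9^k-1}{8}+\alpha=\alpha\frac{9^{k+1}-1}{8}.$$
So the whole theorem follows by induction on $k$ once we prove, for each $\alpha$ in the list and all $n\ge0$,
$$a_{3\alpha+2}(9n+\alpha)\equiv a_{3\alpha+2}(n)\pmod 3. \qquad (\ast)$$

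\textbf{Proving $(\ast)$ by dissection.} Work modulo $3$ with $f_k^3\equiv f_{3k}$. Then
$$\frac{f_2^{3\alpha+1}}{f_1^{3\alpha+2}}\equiv \frac{f_6^{\alpha}}{f_3^{\alpha}}\cdot\frac{f_2}{f_1^2}\equiv \frac{f_6^{\alpha}}{f_3^{\alpha+1}}\, f_1f_2 \pmod 3.$$
The factor $f_6^\alpha/f_3^{\alpha}$ is a series in $q^3$, so the $3$-dissection comes entirely from $f_1f_2$. Use the classical identity
$$f_1f_2=\frac{f_6f_9^4}{f_3f_{18}^2}-qf_9f_{18}-2q^2\frac{f_3f_{18}^4}{f_6f_9^2}.$$

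The first step is to extract the terms $q^{3m+r}$ with $r\equiv\alpha\pmod 3$. Since $\alpha$ is arbitrary, the $q^3$-series factor cannot simply be pulled out unchanged. Instead I would expand $(f_2/f_1)^{\alpha}$ at $q^3$ only through its behaviour modulo $3$: write $\alpha=3\beta+\gamma$ and use $(f_6/f_3)^{3\beta}\equiv (f_{18}/f_9)^{\beta}$. This isolates a factor in $q^9$, leaving $(f_6/f_3)^{\gamma}f_1f_2/f_3$ with $\gamma\in\{0,1,2\}$ to be dissected. After the first extraction and replacing $q^3\to q$, apply the same identity a second time to extract residues modulo $9$. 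The goal is to land back on $f_2^{3\alpha+1}/f_1^{3\alpha+2}$ modulo $3$.

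\textbf{Main obstacle and fallback.} The hard step is this second extraction. The restricted list of $\alpha$ strongly suggests that $(\ast)$ is not a uniform identity: it presumably holds only when certain cross terms cancel modulo $3$. If the hand dissection becomes unmanageable, I would switch to modular forms.

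1. Fix $\alpha$ and rewrite $(\ast)$ as the statement that
$$\sum_n\bigl(a_{3\alpha+2}(9n+\alpha)-a_{3\alpha+2}(n)\bigr)q^n\equiv 0 \pmod 3.$$
2. Multiply by a suitable eta-quotient so that the result is a holomorphic modular form on $\Gamma_0(N)$ for some $N\mid 6^r$, using the standard $U_9$ action for the first sum.
3. Check the congruence up to the Sturm bound for each of the $25$ values of $\alpha$.

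This gives a finite verification, and the failure of the Sturm check would explain why the other $\alpha$ are excluded. I would still try the dissection first for $\gamma=0,1,2$, to see whether the admissible $\alpha$ are governed by a simple condition.
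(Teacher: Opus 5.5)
\textbf{Gap: the internal congruence $(\ast)$ is stated too strongly.} Your base case and the identity $N_{k+1}(n)=9N_k(n)+\alpha$ are correct, and they match the paper's Lemma~\ref{mainlemma}. The problem is the reduction: you ask for $(\ast)$ for \emph{all} $n\ge 0$, and in that form $(\ast)$ is false even for listed values of $\alpha$.
\begin{itemize}
\item For $\alpha=1$, $n=0$ it would require $a_5(1)\equiv a_5(0)$, i.e.\ $5\equiv 1\pmod 3$.
\item For $\alpha=3$, $n=0$ it would require $a_{11}(3)=308\equiv a_{11}(0)=1\pmod 3$, but $308\equiv 2$.
\end{itemize}
So both of your routes aim at an unprovable target. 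The double dissection cannot land back on $f_2^{3\alpha+1}/f_1^{3\alpha+2}$: for $\alpha=1$ it gives $\sum_n a_5(9n+1)q^n\equiv -f_2^2f_3^4/(f_1f_6^2)\equiv -f_1^{11}/f_2^4\pmod 3$. Likewise, $\sum_n\bigl(a(9n+\alpha)-a(n)\bigr)q^n$ already has nonzero constant term modulo $3$ when $\alpha=1$. A Sturm-bound check would therefore fail for admissible $\alpha$, and its failure could not explain why the other $\alpha$ are excluded.

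\textbf{The missing observation.} $N_k(n)\equiv t\pmod 9$ for every $k$. For $k\ge 1$ this is because $\alpha(9^k-1)/8=\alpha(1+9+\cdots+9^{k-1})\equiv\alpha\equiv t$. Hence the induction only ever uses $(\ast)$ on the class $n\equiv t\pmod 9$. Writing $n=9m+t$, what you actually need is
\[
a_{3\alpha+2}(81m+10t+9j)\equiv a_{3\alpha+2}(9m+t)\pmod 3 .
\]
This is exactly the internal-congruence hypothesis of Lemma~\ref{mainlemma}.

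The paper proves it by comparing two sieved series, rather than a sieved series with the unsieved one. Modulo $3$, $g_1\,|\,T_3^2\equiv f_1^Af_2^r\sum_n a(9n+t)q^n$ and $g_2\,|\,T_3^4\equiv f_1^Af_2^r\sum_n a(81n+10t+9j)q^n$. Both lie in the same space, since the factor $E_4^{9(A+r)}\equiv 1$ equalizes the weights. They are then checked to agree up to the Sturm bound for each of the $25$ values of $\alpha$.

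With this corrected target, your dissection idea works at least for $\alpha=1$: both $\sum_n a_5(9n+1)q^n$ and $\sum_n a_5(81n+10)q^n$ come out $\equiv -f_1^5f_3^2/(f_2f_6)\pmod 3$.
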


\begin{rem}
    The above family of values for $\alpha$ may seem a bit odd or incomplete. Indeed, it turns out other congruences can likely be found for many if not all of the $\alpha \ge 0$. The values above are the only ones that fall into the infinite family described. It is possible that other families of congruences exist for many missing values of $\alpha$.
\end{rem}
The structure of the paper is as follows. In Section 2 we recall some necessary facts about modular forms and eta quotients. In Section 3, we prove 2 lemmas which allow us to prove our main theorem by simply checking Sturm bounds, and in Section 4 we prove some Ramanujan congruences of $a_{11}(n)$, and conclude with some discussion on future problems related to these partition numbers.

\section{Prerequisites}
In order to prove our main theorem, we will utilize the theory of modular forms on congruence subgroups. We will recall some basic facts about the theory, but for a more in-depth look, one can take a look at \cite{webofmod},\cite{Diamond_Shurman_2016}, and \cite{Kilford_2015}. To that end, let 
$$
\Gamma_0(N) = \left\{
\begin{pmatrix}
a & b \\
c & d
\end{pmatrix}
\in \mathrm{SL}_2(\mathbb{Z}) : c \equiv 0 \pmod{N} \right\}.
$$
We denote the space of holomorphic modular forms of weight $k$, level $N$, and character $\chi$ as $M_k(\Gamma_0(N),\chi)$.
Additionally for $\Gamma_0(N)$ and integers $n,k \ge 1$, we define linear operators
$$
T_n : M_k(\Gamma_0(N),\chi) \to M_k(\Gamma_0(N),\chi),
$$
called \emph{Hecke operators}. These operators act on Fourier expansions by
$$
\left( \sum_{m\geq 0} a(m) q^m \right) \Big| T_n = \sum_{m\geq 0} \left( \sum_{d \mid (m,n)}\chi(d) d^{k-1} a(mn/d^2) \right) q^m.
$$
 If $n=p$ is prime, then we have 
$$
\left( \sum_{m\geq 0} a(m) q^m \right) \Big| T_p = \sum_{m\geq 0}\left( a(pm)+\chi(p)p^{k-1}a(m/p)\right)  q^m.
$$
Note that, mod $p$, we have 
$$
\left( \sum_{m\geq 0} a(m) q^m \right) \Big| T_p \equiv \sum_{m\geq0} a(pm)q^m \pmod{p}.
$$
We will also use $T_p^k$ to notate applying $T_p$ $k$-times. \\

We note that the normalized weight 4 Eisenstein series of level 1 is given by
$$
E_4(z) = 1+240\sum_{n\geq1}\sum_{\substack{d\geq 0 \\ d|n}}d^3q^n.
$$

We also define the Dedekind eta function to be the modular form of weight $1/2$ given by
$$
\eta(z) = q^{\frac{1}{24}}\prod_{n \geq 1} (1-q^n).
$$
For the purposes of this paper, we are primarily concerned with \emph{eta-quotients}, which are functions of the form 
$$
f(z) = \prod_{\delta |N}\eta(\delta z)^{r_\delta}.
$$
It is well known that the following conditions indicate whenever an eta-quotient is a modular form.
\begin{lem}[Theorem 1.64 and 1.65 of \cite{webofmod}]\label{1.64 1.65}
    Suppose $f(z) = \prod_{\delta |N}\eta(\delta z)^{r_\delta}$ is an eta quotient with $k=\frac{1}{2}\sum_{\delta|N}r_\delta \in \mathbb{Z}$ which satisfies
    $$
    \sum_{\delta |N}\delta r_\delta \equiv 0 \pmod{24},
    $$
    $$
    \sum_{\delta|N}\frac{N}{\delta}r_\delta \equiv 0 \pmod{24},
    $$
and, for any cusp $\frac{c}{d}$ of $\Gamma_0(N)$ with $c,d\geq 1$ and relatively prime,
    $$
    \sum_{\delta|N}\frac{\gcd(d,\delta)^2r_\delta}{\delta} \geq 0.
    $$
Then $f \in M_k(\Gamma_0(N),\chi)$, where 
$$
\chi(d) := \displaystyle\left(\frac{(-1)^k\prod_{\delta|N} \delta^{r_\delta}}{d}\right).
$$. 
\end{lem}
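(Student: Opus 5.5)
This statement is the standard criterion of Gordon--Hughes and Newman, quoted from \cite{webofmod}. I would verify the two defining properties of a modular form directly from the transformation law of $\eta$: modularity of weight $k$ with character $\chi$ under $\Gamma_0(N)$, and holomorphy at every cusp.

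\textbf{Step 1: reduce each factor to a transformation of $\eta$ itself.} Fix $\gamma=\begin{pmatrix} a&b\\ c&d\end{pmatrix}\in\Gamma_0(N)$, and assume $c>0$ (the cases $c=0$ and $c<0$ are handled by $\pm$ symmetry and translation). For each $\delta\mid N$ we have
$$\delta\,\gamma z=\gamma_\delta(\delta z), \qquad \gamma_\delta=\begin{pmatrix} a& b\delta\\ c/\delta & d\end{pmatrix}\in \mathrm{SL}_2(\mathbb Z),$$
where $\gamma_\delta$ is integral because $\delta\mid N\mid c$. Hence $\eta(\delta\gamma z)=\eta(\gamma_\delta(\delta z))$, and the problem reduces to the transformation of $\eta$ under $\gamma_\delta$.

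\textbf{Step 2: apply the eta multiplier system.} Use the classical formula $\eta(\gamma' \tau)=\varepsilon(\gamma')\,(c'\tau+d')^{1/2}\eta(\tau)$. Here $\varepsilon$ is a $24$th root of unity given explicitly by Petersson's formula. For $c'$ odd it reads
$$\varepsilon=\left(\tfrac{d'}{c'}\right)\exp\!\left(\tfrac{\pi i}{12}\big[(a'+d')c'-b'd'(c'^2-1)-3c'\big]\right),$$
and there is an analogous formula with the roles of $c'$ and $d'$ swapped when $d'$ is odd. Since $c'\delta z+d'=cz+d$ for $\gamma'=\gamma_\delta$, taking the product over $\delta$ gives the automorphy factor $(cz+d)^{\frac12\sum r_\delta}=(cz+d)^k$ times $\prod_\delta\varepsilon(\gamma_\delta)^{r_\delta}$. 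In the exponent, the terms linear in $b\delta$ pair with $\sum\delta r_\delta$. The terms linear in $c/\delta=(c/N)\cdot(N/\delta)$ pair with $\sum (N/\delta)r_\delta$. The two hypotheses modulo $24$ therefore make all of these contributions trivial, and only $\prod_\delta\bigl(\tfrac{d}{c/\delta}\bigr)^{r_\delta}$ (or its analogue) survives. One then applies quadratic reciprocity together with $ad\equiv1\pmod{c}$ to convert this product into $\bigl(\tfrac{(-1)^k\prod\delta^{r_\delta}}{d}\bigr)=\chi(d)$.

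This bookkeeping is the main obstacle. It requires splitting into cases according to the parity of $c$ and $d$, and tracking carefully the signs and the factors $(-1)^{(d-1)/2}$ that produce the $(-1)^k$ in $\chi$. I would organize it by first proving the result for $d$ odd and then deducing the case $d$ even by multiplying $\gamma$ by a suitable translation $\begin{pmatrix}1&1\\0&1\end{pmatrix}$; this is harmless because $q$-expansion integrality follows from $\sum\delta r_\delta\equiv0\pmod{24}$.

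\textbf{Step 3: holomorphy at the cusps.} Each factor $\eta(\delta z)$ is nonvanishing on $\mathbb H$, so $f$ is holomorphic there. At a cusp $c/d$ with $\gcd(c,d)=1$, I would move the cusp to $\infty$ by a matrix in $\mathrm{SL}_2(\mathbb Z)$ and expand each $\eta(\delta z)$ there. This computes the order of vanishing as
$$\frac{N}{24}\sum_{\delta\mid N}\frac{\gcd(d,\delta)^2\,r_\delta}{\gcd(d,N/d)\,d\,\delta}.$$
This is a positive multiple of the quantity assumed to be $\ge 0$, so $f$ is holomorphic at every cusp. Together with Step 2, this gives $f\in M_k(\Gamma_0(N),\chi)$.
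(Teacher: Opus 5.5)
The paper does not prove this lemma; it imports it from Ono's book, where it is the Gordon--Hughes--Newman transformation law (Theorem 1.64) combined with Ligozat's cusp-order formula (Theorem 1.65). Your outline is the standard argument behind that citation. Steps 1 and 3 are correct as written. In Step 2 the exponentials in the odd-$c'$ formula do all cancel, since $\sum_\delta r_\delta\,c/\delta=(c/N)\sum_\delta r_\delta\,N/\delta\equiv 0\pmod{24}$ and $\sum_\delta r_\delta\,c^2/\delta=c\sum_\delta r_\delta\,c/\delta$.

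The one step whose stated justification falls short is the passage to even $d$ by translation.

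\textbf{The missing check.} Replacing $\gamma$ by $\gamma\begin{pmatrix}1&1\\0&1\end{pmatrix}$ gives $f(\gamma z)=\chi(c+d)(cz+d)^kf(z)$. The condition $\sum\delta r_\delta\equiv0\pmod{24}$ only supplies $f(z-1)=f(z)$. You also need $\chi(c+d)=\chi(d)$, and this is not automatic: a Kronecker symbol $\bigl(\tfrac{D}{\cdot}\bigr)$ with $D\equiv 3\pmod 4$ is not periodic modulo the odd part of $|D|$. It does hold here. If $d$ is even, then $c$ is odd, hence so are $N$ and every $\delta$. Reducing $\sum\delta r_\delta\equiv 0$ modulo $4$ gives $k\equiv\sum_{\delta\equiv 3\bmod 4}r_\delta\pmod 2$. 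Hence $(-1)^k\prod_\delta\delta^{|r_\delta|}\equiv1\pmod 4$, and $\chi$ has conductor dividing $N$.

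\textbf{A simpler route.} The translation can be dropped entirely. If $c$ is even, then $d$ is odd and the $d'$-odd formula applies to every $\gamma_\delta$. If $c$ is odd, then every $c/\delta$ is odd and the $c'$-odd formula applies to every $\gamma_\delta$.

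In the $c$-odd case, use the Kronecker form of reciprocity, $\bigl(\tfrac{d}{m}\bigr)=\bigl(\tfrac{m^*}{d}\bigr)$ with $m^*=(-1)^{(m-1)/2}m$. Together with $\tfrac12\sum_\delta r_\delta\,c/\delta\equiv0\pmod{12}$, this yields $\chi(d)$ for $d$ of either parity. The congruence $ad\equiv1\pmod c$ is never needed.

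In the $c$-even case no reciprocity is needed, because $\prod_\delta\bigl(\tfrac{c/\delta}{d}\bigr)^{r_\delta}=\bigl(\tfrac{c}{d}\bigr)^{2k}\prod_\delta\bigl(\tfrac{\delta}{d}\bigr)^{r_\delta}$. However, it is then not true that only the symbols survive. The part of the exponent involving neither $b\delta$ nor $c/\delta$ (namely $3d-3$ in your normalization) is untouched by the hypotheses. It is exactly this term that produces $(-1)^{k(d-1)/2}$, and hence the $(-1)^k$ in $\chi$.
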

Finally, we end our prerequisites with a theorem of Sturm that will allow us to prove the necessary congruences. Define $ord_p(f)$ as the smallest $n$ such that the coefficient of $q^n$ is not $0$ mod $p$.
\begin{lem}[Theorem 2.58 of \cite{webofmod}]\label{sturmbound}
    Let $p$ be prime and suppose $f(z), g(z) \in M_k(\Gamma_0(N),\chi)$ have integer coefficients. If
    $$
    ord_p(f(z)-g(z)) > \frac{k}{12}[\mathrm{SL}_2(\mathbb{Z}):\Gamma_0(N)],
    $$
    then $$ord_p(f(z)-g(z)) = \infty.$$ In other words, the coefficients of their Fourier expansions are congruent mod $p$.
\end{lem}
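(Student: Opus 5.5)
The plan is to reduce to level one by taking a norm, and then prove the level-one case directly from an integral basis of $M_K(\mathrm{SL}_2(\mathbb{Z}))$. Put $h = f - g \in M_k(\Gamma_0(N),\chi)$, which has integer coefficients, and write $\mu = [\mathrm{SL}_2(\mathbb{Z}):\Gamma_0(N)]$. Suppose $ord_p(h) > k\mu/12$; we must show $h \equiv 0 \pmod p$.

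\textbf{Step 1: remove the character.} Let $m$ be the order of $\chi$ and replace $h$ by $h^m$. This is a form of weight $km$ on $\Gamma_0(N)$ with trivial character. Both sides of the hypothesis scale by $m$: $ord_p(h^m) = m\,ord_p(h)$, because $\mathbb{F}_p[[q]]$ is an integral domain, and the bound $\frac{k}{12}\mu$ becomes $\frac{km}{12}\mu$. The same fact shows that $h^m \equiv 0$ forces $h \equiv 0$. So we may assume $\chi$ is trivial.

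\textbf{Step 2: take the norm.} Choose coset representatives $\gamma_1 = I, \gamma_2, \dots, \gamma_\mu$ for $\Gamma_0(N)\backslash \mathrm{SL}_2(\mathbb{Z})$ and set
$$F = \prod_{i=1}^{\mu} h|_k\gamma_i .$$
This is a level-one modular form of weight $K = k\mu$. Each $h|_k\gamma_i$ has a $q^{1/N}$-expansion with coefficients in $\mathbb{Z}[\zeta_N, 1/N]$ and bounded denominators.

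\begin{itemize}
\item Fix a prime $\mathfrak{P}$ above $p$ in a suitable finite extension, and scale each factor by a constant so that it is $\mathfrak{P}$-integral and nonzero mod $\mathfrak{P}$.
\item Then $ord_{\mathfrak{P}}(F) \ge ord_p(h) > K/12$.
\item The scaled $F$ has $\mathfrak{P}$-integral rational-integer-type coefficients. Its coefficients lie in $\mathbb{Z}[\zeta_N]$ after clearing, and by Galois symmetry of the coset set (the Galois conjugate of $h|\gamma$ is $h|\gamma'$) they are in fact rational, up to a common scalar.
\end{itemize}

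\textbf{Step 3: level one.} Use the Miller basis of $M_K(\mathrm{SL}_2(\mathbb{Z}))$. It consists of forms $g_j = q^j + O(q^{d})$ for $0 \le j < d = \dim M_K$, built from $E_4$, $E_6$ and $\Delta$, with integer coefficients. Since $d - 1 \le K/12$, the following argument applies. Suppose $F \not\equiv 0$. Then $F \equiv \sum_j c_j g_j \pmod{\mathfrak{P}}$, and its first nonzero coefficient occurs at some index $j \le d-1 \le K/12$. This contradicts $ord_{\mathfrak{P}}(F) > K/12$. Hence $F \equiv 0 \pmod{\mathfrak{P}}$.

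\textbf{Step 4 (main obstacle): return to $h$.} Since the ring of $q^{1/N}$-series over the residue field is a domain, Step 3 gives that some factor $h|_k\gamma_i$ vanishes mod $\mathfrak{P}$. We need this factor to be the one with $\gamma_i = I$, i.e.\ we need the normalization scalars to be compatible. My plan is to appeal to the $q$-expansion principle, which says that vanishing mod $\mathfrak{P}$ of the expansion at one cusp forces vanishing at all cusps. This is immediate for $p \nmid N$ via Deligne--Rapoport/Katz.

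For $p \mid N$ I would instead argue as follows.
\begin{itemize}
\item Assume $h \not\equiv 0$, and divide out the largest power of $p$ from each conjugate so that the normalization is canonical.
\item Compare orders: the total order at infinity of $F$ would be $ord(h) + \sum_{i \ge 2} ord(h|\gamma_i)$.
\item The valence formula over $\mathbb{F}_p$ then bounds this total by $K/12$, which gives the contradiction.
\end{itemize}
I expect this compatibility of normalizations across cusps to be the delicate point of the proof.
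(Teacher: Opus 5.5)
The paper gives no proof of this lemma; it is Sturm's theorem, quoted from Ono's book. Your outline is Sturm's argument: kill the character with a power, take the norm over $\Gamma_0(N)\backslash\mathrm{SL}_2(\mathbb{Z})$, and apply the level-one bound from the Miller basis. Steps 1 and 3 are fine, but Steps 2 and 4 do not fit together.

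In Step 2 you rescale every factor, including $\gamma_1=I$, to be $\mathfrak{P}$-integral with nonzero reduction, and then assert $ord_{\mathfrak{P}}(F)\ge ord_p(h)$. That inequality holds only if $h$ itself was left unscaled, i.e.\ only if $h\not\equiv 0\pmod p$. If $h=p^a h_0$ and you replace it by $h_0$, the hypothesis on $ord_p(h)$ says nothing about $h_0$. In Step 4 you then conclude that some factor vanishes mod $\mathfrak{P}$, which is impossible under the normalization you imposed.

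That impossibility is already the end of the proof. There is no ``compatibility of normalizations across cusps'' to arrange, and the $q$-expansion principle is not needed. This matters for the paper: your principle-based branch only covers $p\nmid N$, while the paper applies the lemma with $p=3$ and $N=24/\gcd(\cdot,8)$, which is always divisible by $3$.

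The correct ending is your own $p\mid N$ bullet list, and it works for every $p$:
\begin{enumerate}
\item Assume for contradiction that $h\not\equiv 0\pmod p$, and leave $h$ alone.
\item For each $i\ge 2$, multiply $h|_k\gamma_i$ by a constant $\lambda_i$ of suitable $\mathfrak{P}$-adic valuation so that it is $\mathfrak{P}$-integral with nonzero reduction. Bounded denominators make this possible. If $\mathfrak{P}$ is ramified, $\lambda_i$ need not be a power of $p$.
\item Then $F=h\prod_{i\ge 2}\lambda_i\,h|_k\gamma_i$ is a level-one form of weight $K=k\mu$ with $\mathfrak{P}$-integral coefficients.
\item Since $(\mathcal{O}/\mathfrak{P})[[q^{1/N}]]$ is a domain and the other factors involve only nonnegative powers of $q^{1/N}$, the reduction of $F$ is nonzero with order at least $ord_p(h)>K/12$.
\item This contradicts Step 3, which shows that a nonzero reduction has order at most $d-1\le K/12$.
\end{enumerate}

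The rationality claim in your third bullet of Step 2 is also unnecessary. The Miller basis is integral and unitriangular, so the $c_j$ in Step 3 are just the first $d$ Fourier coefficients of $F$, and the argument runs over the localization $\mathcal{O}_{\mathfrak{P}}$ directly.
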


We are now equipped to prove our necessary lemmas and main theorem.

\section{Proof of Theorem \ref{mainthm}}
We begin by proving 2 lemmas. The first will show that, given a base congruence and an internal congruence, we have an infinite family of congruences. The second will establish the necessary conditions for the existence of these internal congruences. Then we prove our main theorem by using Lemma \ref{sturmbound}.

\begin{lem}\label{mainlemma}
    Fix $\alpha \geq 0$ such that $\alpha = 9j+t$, where $0 \leq t \leq8$. For all $n \geq 0$,
    if 
    $$
    a_{3\alpha+2}(27n + (18+t)) \equiv 0 \pmod{3}, 
    $$
    and
    $$
    a_{3\alpha+2}(9n+t) \equiv a_{3\alpha+2}(81n+10t+9j) \pmod{3},
    $$
    then
    $$
    a_{3\alpha+2}\!\left(3^{\,2k+3} n + \delta_k\right) \equiv 0 \pmod{3}
    $$
    where
    $$
    \delta_k = 9^k(18+t) + \alpha \frac{9^k - 1}{8}.
    $$
\end{lem}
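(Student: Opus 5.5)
Induction on $k$ is the natural route. The case $k=0$ is the base hypothesis itself, since $\delta_0 = 18+t$ and $3^{3}n+\delta_0 = 27n+(18+t)$. For the inductive step, assume
$$a_{3\alpha+2}(3^{2k+3}n+\delta_k)\equiv 0 \pmod 3$$
for all $n\ge 0$. The goal is to show that the internal congruence transports this to level $k+1$. Concretely, I want to rewrite the argument $3^{2k+5}n+\delta_{k+1}$ in the form $81m+10t+9j$ with $m$ chosen so that $9m+t = 3^{2k+3}n+\delta_k$.

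The key computation is to check that $\delta_{k+1} = 9\delta_k + \alpha$. This follows because $9\cdot 9^k(18+t) = 9^{k+1}(18+t)$ and
$$9\alpha\frac{9^k-1}{8}+\alpha = \alpha\frac{9^{k+1}-1}{8}.$$
Next I set $m = 3^{2k+1}n + (\delta_k - t)/9$, which requires $\delta_k \equiv t \pmod 9$. For $k\ge1$ we have $9^k(18+t)\equiv 0$ and $\alpha(9^k-1)/8 = \alpha(1+9+\dots+9^{k-1}) \equiv \alpha \equiv t \pmod 9$. For $k=0$, $\delta_0 = 18+t \equiv t$. So $m$ is a nonnegative integer and $9m+t = 3^{2k+3}n+\delta_k$.

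Then
$$81m + 10t + 9j = 3^{2k+5}n + 9(\delta_k - t) + 10t + 9j = 3^{2k+5}n + 9\delta_k + t + 9j = 3^{2k+5}n + 9\delta_k + \alpha = 3^{2k+5}n+\delta_{k+1},$$
using $\alpha = 9j+t$. Applying the internal congruence with this $m$ gives
$$a_{3\alpha+2}(3^{2k+5}n+\delta_{k+1}) \equiv a_{3\alpha+2}(9m+t) = a_{3\alpha+2}(3^{2k+3}n+\delta_k) \equiv 0 \pmod 3,$$
which completes the induction.

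The only delicate point is verifying $\delta_k\equiv t \pmod 9$ and the identity $9\delta_k+\alpha=\delta_{k+1}$. This is routine bookkeeping with the geometric sum, so I expect no real obstacle.
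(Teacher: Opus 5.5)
Your proof is correct and follows the paper's argument. Like the paper, you induct on $k$, take the hypothesis as the base case, and do the inductive step via $\delta_{k+1}=9\delta_k+\alpha$ and the internal congruence in the form $a_{3\alpha+2}(m)\equiv a_{3\alpha+2}(9m+\alpha) \pmod{3}$ for $m\equiv t \pmod{9}$; you additionally verify that $3^{2k+3}n+\delta_k\equiv t \pmod{9}$, which the paper uses without comment.
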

\begin{proof}
We proceed by induction. \\

\textbf{Base case} ($k=0$).
Since we have $\delta_0 = 18+t$ and $3^{2\cdot0+3}=27$, we obtain
$$
a_{3\alpha+2}(27n+(18+t)) \equiv 0 \pmod{3},
$$ \\
which follows by our hypothesis. \\

\textbf{Inductive Step}. Assume our hypothesis holds up to $k$. Note that 
\begin{align*}
     &a_{3\alpha+2}\left(3^{2(k+1)+3}n+9^{k+1}(18+t)+\alpha\frac{9^{k+1}-1}{8}\right) \\
    =\,\, &a_{3\alpha+2}\left( 9 \left(3^{2k+3}n+9^k(18+t)+\alpha\frac{9^k-1}{8}\right)+\alpha \right)
\end{align*}
Now, by our internal congruence,
\begin{align*}
    &a_{3\alpha+2}(9n+t) \equiv a_{3\alpha+2}(9(9n+t)+9j+t) \pmod{3} \\
    \implies &a_{3\alpha+2}(9n+t) \equiv a_{3\alpha+2}(9(9n+t)+\alpha) \pmod{3}, \\
\end{align*}
which implies 
$$
a_{3\alpha+2}(m) \equiv a_{3\alpha+2}(9m+\alpha) \pmod{3},
$$
whenever $m \equiv t \pmod{9}$. Therefore, we have 
\begin{align*}
    &a_{3\alpha+2}\left( 9 \left(3^{2k+3}n+9^k(18+t)+\alpha\frac{9^k-1}{8}\right)+\alpha \right)  \\
    \equiv\,\, &a_{3\alpha+2}\left(3^{2k+3}n+9^k(18+t)+\alpha\frac{9^k-1}{8}\right) \equiv 0  \pmod{3},
\end{align*}
which completes the proof.
\end{proof}

\begin{lem}\label{neccessarymodforms}
    Fix $\alpha \geq 0$ such that $\alpha = 9j+t$ where $0\leq t \leq 8$, $r\in\{0,1\}$ such that $r \equiv\alpha+1 \pmod 2$, and $0 \leq A \leq 24$ such that $A \equiv -3t-3j-2r \pmod{24}$. Then
    $$
    g_1(z) = \frac{\eta(2z)^{3\alpha+1}}{\eta(z)^{3\alpha+2}}\cdot\eta(z)^{9A}\eta(2z)^{9r} E_4(z)^{9(A+r)}
    $$
    and
    $$
    g_2(z)=\frac{\eta(2z)^{3\alpha+1}}{\eta(z)^{3\alpha+2}}\cdot\eta(z)^{81A}\eta(2z)^{81r}
    $$
    are modular forms of weight $k=\frac{81(A+r)-1}{2}$, level $N = \frac{24}{\mathrm{gcd}(3A+\frac{-\alpha-1+3r}{2},8)}$, and character $$
\chi(d) = 
\begin{cases}
    \left(\frac{-1}{d}\right), &\text{ if }\quad \alpha\equiv 0,3 \pmod{4} \\
    1, &\text{ if }\quad \alpha \equiv 1,2 \pmod{4}.
\end{cases},
$$ assuming each has positive orders of vanishing at their respective cusps. Further, 
    $$
    g_1(z) \divides T_3^2 \equiv f_1^Af_2^r \sum_{n} a_{3\alpha+2}(9n+r)q^n \pmod{3}
    $$
    and
    $$
    g_2(z) \divides T_3^4 \equiv f_1^Af_2^r \sum_{n} a_{3\alpha+2}(81n+10t+9j)q^n \pmod{3},
    $$
where $T_3$ is the third Hecke operator.
\end{lem}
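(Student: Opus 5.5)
The plan is to treat the two halves of the statement separately: first verify the eta-quotient hypotheses of Lemma \ref{1.64 1.65} (modulo the cusp conditions, which the statement explicitly assumes), then compute the action of $T_3^2$ and $T_3^4$ modulo $3$ directly on $q$-expansions.

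\textbf{Modularity.} For $g_2$ the exponents are $r_1 = 81A-3\alpha-2$ and $r_2 = 3\alpha+1+81r$. The weight is $\frac12(r_1+r_2) = \frac{81(A+r)-1}{2}$. This is an integer because $A+r$ is odd. Indeed, $A \equiv -3t-3j-2r \equiv t+j \equiv 9j+t = \alpha \pmod 2$, while $r \equiv \alpha+1 \pmod 2$.

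For $g_1$, the eta part has weight $\frac{9(A+r)-1}{2}$ and $E_4^{9(A+r)}$ adds $36(A+r)$. The total is again $\frac{81(A+r)-1}{2}$, and $E_4$ is modular of level $1$, so only the eta part needs checking.

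The first condition of Lemma \ref{1.64 1.65} is $r_1+2r_2 \equiv 0 \pmod{24}$. For $g_2$ this reads $81A+162r+3\alpha \equiv 0$. Substituting $A\equiv -3t-3j-2r$ gives $-243(t+j)+3(9j+t) = -240t-216j \equiv 0 \pmod{24}$, so the choice of $A$ is designed exactly for this. The same computation with $9$ in place of $81$ handles $g_1$: $9A+18r+3\alpha \equiv -27(t+j)+27j+3t = -24t \equiv 0$.

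The second condition is $N r_1 + \frac N2 r_2 \equiv 0 \pmod{24}$. Writing it as $\frac N2(2r_1+r_2)\equiv 0 \pmod{24}$ and simplifying $2r_1+r_2$ modulo $48$ shows that the minimal admissible $N$ is $24/\gcd(3A+\frac{-\alpha-1+3r}{2},8)$. I expect this to be a short but fiddly congruence manipulation, and it must be done for both $81$ and $9$. The two agree modulo $48$ because $81\equiv 9 \pmod{24}$ on the relevant terms.

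For the character, $\prod \delta^{r_\delta} = 2^{r_2}$ and $r_2$ is even, since $3\alpha+1+81r \equiv \alpha+1+r \equiv 0 \pmod 2$. Hence the Legendre symbol of $2^{r_2}$ is trivial, and $\chi(d)=\left(\frac{(-1)^k}{d}\right)$. It remains to determine the parity of $k=\frac{81(A+r)-1}{2}$ from $\alpha \bmod 4$. The expected answer is that $k$ is odd exactly when $\alpha\equiv 0,3 \pmod 4$, obtained by writing $A+r$ modulo $4$ in terms of $\alpha$.

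\textbf{Hecke action.} Modulo $3$ we have $E_4\equiv 1$, and $f_1^{3^m}\equiv f_{3^m}$ by Frobenius. Hence
$$
g_1 \equiv q^{s_1} f_9^A f_{18}^r \sum_n a_{3\alpha+2}(n) q^n, \qquad s_1 = \tfrac{9A+18r+3\alpha}{24},
$$
and similarly $g_2 \equiv q^{s_2} f_{81}^A f_{162}^r \sum_n a_{3\alpha+2}(n)q^n$ with $s_2=\frac{81A+162r+3\alpha}{24}$. Both $s_1$ and $s_2$ are integers by the first condition above. By the remark in the Prerequisites, $T_3$ acts as $U_3$ modulo $3$, so $T_3^2 \equiv U_9$ and $T_3^4\equiv U_{81}$. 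The factors $f_9^Af_{18}^r$ (respectively $f_{81}^Af_{162}^r$) pass through $U_9$ (respectively $U_{81}$) and become $f_1^Af_2^r$. It then remains to verify that the surviving indices $n$, namely those with $n+s_1\equiv 0 \pmod 9$ (respectively $n+s_2\equiv 0\pmod{81}$), are exactly $n\equiv r \pmod 9$ (respectively $n\equiv 10t+9j \pmod{81}$). In other words, one checks $s_1\equiv -r \pmod 9$ and $s_2 \equiv -(10t+9j) \pmod{81}$, after which the leftover power of $q$ matches the stated normalization.

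I expect this residue computation of $s_2 \bmod 81$ to be the main obstacle. One must invert $24$ modulo $81$ and reduce using $A\equiv -3t-3j-2r \pmod{24}$. Since $A$ is only determined modulo $24$, I would first try substituting $A = -3t-3j-2r+24m$ explicitly and checking that the dependence on $m$ cancels modulo $81$ in $s_2$ (it does, since $81\cdot 24m/24=81m$). After that, a direct linear computation in $t,j,r$ should finish it.
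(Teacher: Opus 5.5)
Your route is the same as the paper's: the eta-quotient criterion of Lemma \ref{1.64 1.65}, then $E_4\equiv 1$, $f_1^{9}\equiv f_9$, $f_1^{81}\equiv f_{81}$ and $T_3\equiv U_3 \pmod 3$. The weight, the first eta condition and the character parity are set up correctly. The step that fails is the final check $s_1\equiv -r\pmod 9$, which is false. Put $A=-3t-3j-2r+24m$ with $m=(A+3t+3j+2r)/24\in\mathbb{Z}$. Your own computation of the first condition, done exactly rather than modulo $24$, gives $9A+18r+3\alpha=-24t+216m$. Hence $s_1=9m-t\equiv -t\pmod 9$. Concretely, for $\alpha=1$ ($t=1$, $j=0$, $r=0$, $A=21$) one has $s_1=8$ and
\[
g_1\mid T_3^2\equiv q\,f_1^{21}\sum_{n\ge 0}a_5(9n+1)q^n \pmod 3 .
\]
This has constant term $0$, whereas $f_1^{21}\sum_n a_5(9n)q^n$ has constant term $a_5(0)=1$. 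So the first displayed congruence is false as printed, and no argument can establish it. The operator $U_9$ picks out the residue class $t$, not $r$, so ``$9n+r$'' must be read as ``$9n+t$''. That is exactly the left-hand side of the internal congruence used in Lemma \ref{mainlemma}. Your proof should state this correction rather than assert that the check goes through.

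Likewise, the leftover power of $q$ does not match the stated normalization. Since $s_1=9m-t$ and $s_2=81m-10t-9j$ (your $s_2$ computation is right), what you actually obtain is
\[
g_1\mid T_3^2\equiv q^{m}f_1^Af_2^r\sum_{n\ge0}a_{3\alpha+2}(9n+t)q^n,\qquad g_2\mid T_3^4\equiv q^{m}f_1^Af_2^r\sum_{n\ge0}a_{3\alpha+2}(81n+10t+9j)q^n \pmod 3 .
\]
Here $m\ge 1$ always, because $A+3t+3j+2r$ is a positive multiple of $24$ (if $t=j=0$ then $\alpha=0$ and $r=1$). The common factor $q^m$ cancels when the two are compared, so Theorem \ref{mainthm} is unaffected. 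The paper's proof sidesteps the issue by stopping at $\sum_n a_{3\alpha+2}(9n-s_1)q^n$ and $\sum_n a_{3\alpha+2}(81n-s_2)q^n$ ``after a suitable reindexing.''

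A smaller inaccuracy concerns the level. One has $2r_1+r_2=6X$ with $X=3A+\frac{-\alpha-1+3r}{2}$, so the level condition is just $NX\equiv 0\pmod 8$ together with $2\mid N$. Thus $24/\gcd(X,8)$ works whenever it is even, but it is never minimal, since it carries a superfluous factor $3$. When $8\mid X$ (e.g.\ $\alpha=5$, $A=9$, $X=24$) it equals $3$, which is not a valid level for a quotient containing $\eta(2z)$. This case does not occur for the $\alpha$ in Theorem \ref{mainthm}, but you should drop the word ``minimal.''
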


\begin{proof}
    It suffices to satisfy the conditions of Lemma \ref{1.64 1.65} in order to show the eta-quotients are modular forms. To that end, we require that 
    $$
    \alpha+3A+6r \equiv 0 \pmod{8}
    $$
    and
    $$
    \alpha+27A+54r \equiv 0 \pmod{8}.
    $$
    Clearly, the first congruence implies the second. Additionally, we need a positive integer $N$ such that 
    $$
    N\left(-(3\alpha+2)+\frac{3\alpha+1}{2}+9A+\frac{9}{2}r\right) \equiv 0 \pmod{24}
    $$
    and
    $$
    N\left(-(3\alpha+2)+\frac{3\alpha+1}{2}+81A+\frac{81}{2}r\right) \equiv 0 \pmod{24}.
    $$
   Again, the first congruence implies the second. Choosing $A \equiv -3t-3j-2r \pmod{24}$, as in the statement of the lemma, satisfies both congruences with $N = \frac{24}{\mathrm{gcd}(3A+\frac{-\alpha-1+3r}{2},8)}$. It is also straightforward to calculate that the character for each is given by 
$$
\chi(d) = 
\begin{cases}
    \left(\frac{-1}{d}\right), &\text{ if }\quad \alpha\equiv 0,3 \pmod{4} \\
    1, &\text{ if }\quad \alpha \equiv 1,2 \pmod{4}.
\end{cases}
$$ Thus we can conclude that $g_1(z), g_2(z) \in M_k(\Gamma_0(N),\chi)$ if they have positive orders of vanishing at the cusps.\\

Now, using the definition of $\eta(z)$ and the fact that $E_4 \equiv 1 \pmod{3}$, 
$$
g_1(z) \equiv f_9^Af_{18}^r\cdot\sum_{n\geq 0}a_{3\alpha+2}(n)q^{n+\frac{\alpha+3A+6r}{8}} \pmod 3,
$$
and
$$
g_1(z) \equiv f_{81}^Af_{162}^r\cdot\sum_{n\geq 0}a_{3\alpha+2}(n)q^{n+\frac{\alpha+27A+54r}{8}} \pmod 3.
$$
Hence, after a suitable reindexing,
$$
g_1(z) \divides T_3^2 \equiv f_1^Af_2^r \sum_{n} a_{3\alpha+2}\left(9n-\frac{\alpha+3A+6r}{8}\right)q^n \pmod{3},
$$
and
$$
g_2(z) \divides T_3^4 \equiv f_1^Af_2^r \sum_{n} a_{3\alpha+2}\left(81n-\frac{\alpha+27A+54r}{8}\right)q^n \pmod{3}.
$$
\end{proof}

We are now ready to prove our main theorem.

\begin{proof}[Proof of Theorem \ref{mainthm}]
    Let $\alpha \in \{1,3,4,6,7,9,11,12,14,15,20,22,23,27,28,30,31,36,38,39,46,47,54,55,63\}$. Note that Lemmas \ref{sellerslemma} and \ref{neccessarymodforms} provide the necessary conditions for Lemma \ref{mainlemma}. Additionally, for each $\alpha$, the orders of vanishing at the cusps for $g_1(z)$ and $g_2(z)$ have been verified with Maple to be positive. Hence, it suffices to check that $g_1(z) |T_3^2 \equiv g_2(z) | T_3^4 \pmod{3}$ up to the Sturm bound. This has been verified in Maple,
    which completes the proof.
\end{proof}

\section{Some Extra Congruences and Conclusion}

We conclude with some nice Ramanujan congruences for $a_{11}(n)$ and some questions. \\

Note that we have the following lemma.
\begin{lem}[Lemma 2.1 in \cite{hirschhorn2025familycongruencesmodulo7}]\label{Jlemma}
We have
    \begin{align*}
        f_1^3 &= \sum_{k \geq 0} (-1)^k(2k+1)q^{k(k+1)/2} \\
        &\equiv \mathcal{J}_0(q^7)+q\mathcal{J}_1(q^7) + q^3\mathcal{J}_3(q^7) \pmod{7},
    \end{align*}
    where $\mathcal{J}_0,\mathcal{J}_1,\mathcal{J}_3 \in \mathbb{Z}[[q]]$.
\end{lem}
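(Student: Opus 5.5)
The identity $f_1^3=\sum_{k\ge0}(-1)^k(2k+1)q^{k(k+1)/2}$ is Jacobi's identity, which I take as known. The plan is to reduce the three-term decomposition mod $7$ to a statement about which residues the exponents $k(k+1)/2$ can occupy modulo $7$.

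First, I will group the terms of the series according to $k \bmod 7$. Write $k=7m+s$ with $0\le s\le 6$. Then
$$
\frac{k(k+1)}{2}=\frac{49m^2+7m(2s+1)+s(s+1)}{2}\equiv \frac{s(s+1)}{2}\pmod 7.
$$
Here $7$ is odd, so division by $2$ is legitimate modulo $7$. The values of $s(s+1)/2$ for $s=0,\dots,6$ are $0,1,3,6,10,15,21$, which reduce modulo $7$ to $0,1,3,6,3,1,0$. So every exponent lies in the classes $0$, $1$, $3$, or $6$ modulo $7$. The class $6$ occurs only for $s=3$.

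Next, I will dispose of the class $6$. When $s=3$ we have $2k+1=14m+7\equiv 0\pmod 7$, so all those coefficients vanish modulo $7$. I expect this to be the only point that needs care, because it is the step that removes the extra residue class. What remains is the three classes $0,1,3$.

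Finally, I will set
$$
\mathcal{J}_i(q^7)=q^{-i}\sum_{\substack{k\ge0\\ k(k+1)/2\equiv i \ (\mathrm{mod}\ 7)}}(-1)^k(2k+1)q^{k(k+1)/2}
$$
for $i\in\{0,1,3\}$. Each such series involves only powers $q^{i+7n}$ with $n\ge0$, since the smallest exponent in class $i$ is $i$ itself (attained at $k=0,1,2$ respectively). Hence $\mathcal{J}_i$ is a well-defined element of $\mathbb{Z}[[q]]$. Summing the three pieces and discarding the class-$6$ terms, which are $0 \bmod 7$, gives
$$
f_1^3\equiv \mathcal{J}_0(q^7)+q\mathcal{J}_1(q^7)+q^3\mathcal{J}_3(q^7)\pmod 7,
$$
as claimed.
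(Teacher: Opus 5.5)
Your argument is correct and complete. It uses Jacobi's identity, then sorts $k$ by residue mod $7$ so that $k(k+1)/2\equiv 0,1,3,6 \pmod 7$. The class $6$ drops out because it forces $k\equiv 3 \pmod 7$ and hence $7\mid 2k+1$.

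The paper gives no proof of its own; it simply cites Lemma 2.1 of Hirschhorn--Sellers. Your argument is essentially the standard proof behind that citation, sometimes phrased via $8\cdot\frac{k(k+1)}{2}+1=(2k+1)^2$, so that the exponent lands in the class $6$ exactly when $7\mid 2k+1$.
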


We now prove the three Ramanujan congruences for $a_{11}$.

\begin{thm}
The following Ramanujan congruences hold,
\begin{align*}
    a_{11}(5n+4) &\equiv0 \pmod{5}, \\
    a_{11}(7n+4) &\equiv0 \pmod{7}, \\ 
   \text{ and } \quad a_{11}(11n+1) &\equiv0 \pmod{11}.
\end{align*}
\end{thm}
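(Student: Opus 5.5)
The plan is to work directly with the generating function $\sum_{n\ge0}a_{11}(n)q^n = f_2^{10}/f_1^{11}$. For each prime $p\in\{5,7,11\}$ we rewrite it using the freshman's dream $f_k^p\equiv f_{kp}\pmod p$. This reduces everything to a small product whose $p$-dissection is governed by a classical identity: the Jacobi identity of Lemma \ref{Jlemma}, or Euler's pentagonal number theorem.

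\emph{Modulo $11$.} We have $f_1^{11}\equiv f_{11}\pmod{11}$, so
$$\sum_{n\ge0}a_{11}(n)q^n \equiv \frac{f_2^{10}}{f_{11}} \equiv \frac{f_{22}}{f_{11}}\cdot\frac{1}{f_2}\pmod{11}.$$
The factor $f_{22}/f_{11}$ is a series in $q^{11}$, so we need the coefficients of $q^{11n+1}$ in $1/f_2$. These are $p((11n+1)/2)$ when $11n+1$ is even, and $0$ otherwise. Writing $11n+1=2m$ gives $m\equiv 6\pmod{11}$, and Ramanujan's congruence $p(11m+6)\equiv0\pmod{11}$ finishes this case.

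\emph{Modulo $5$.} We write $f_2^{10}\equiv f_{10}^2$ and $f_1^{11}=f_1^{10}f_1\equiv f_5^2 f_1$, so
$$\sum_{n\ge0}a_{11}(n)q^n\equiv \frac{f_{10}^2}{f_5^2}\cdot\frac1{f_1}\pmod 5.$$
The prefactor is a series in $q^5$, so $a_{11}(5n+4)\equiv\sum_j c_j\,p(5(n-j)+4)\equiv0$ by $p(5m+4)\equiv0\pmod5$.

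\emph{Modulo $7$.} Here $f_1^{11}=f_1^7f_1^4\equiv f_7f_1^4$ and $f_2^{10}=f_2^7f_2^3\equiv f_{14}f_2^3$, giving
$$\sum_{n\ge0}a_{11}(n)q^n\equiv \frac{f_{14}}{f_7}\cdot\frac{f_2^3}{f_1^4}\pmod 7.$$
We then use $1/f_1^4 = f_1^3/f_1^7\equiv f_1^3/f_7$. The expression becomes $(f_{14}/f_7^2)\,f_1^3f_2^3$, so it suffices to show that the coefficients of $q^{7n+4}$ in $f_1^3f_2^3$ vanish mod $7$. By Lemma \ref{Jlemma}, the exponents in $f_1^3$ are triangular numbers $k(k+1)/2$, which are congruent to $0,1,3,6$ modulo $7$. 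The residue class $6$ comes only from $k\equiv3\pmod7$, where the coefficient $2k+1\equiv0$; that is why only $0,1,3$ appear. Likewise, $f_2^3$ contributes exponents that are $2\times$ those residues, namely $0,2,6$.

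The residues of $x+y$ with $x\in\{0,1,3\}$ and $y\in\{0,2,6\}$ are $\{0,2,6,1,3,0,3,5,2\}$. The residue $4$ does not occur, so the coefficient of every $q^{7n+4}$ in $f_1^3f_2^3$ is $0$ mod $7$, giving $a_{11}(7n+4)\equiv0\pmod7$.

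The only delicate step is this mod $7$ case: choosing the right rewriting so that Lemma \ref{Jlemma} applies, and then doing the residue bookkeeping. The other two cases reduce immediately to Ramanujan's classical congruences for $p(n)$.
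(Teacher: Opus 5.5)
Your proof is correct and follows the paper's argument essentially verbatim. It uses the same factorizations $\frac{f_{10}^2}{f_5^2}\cdot\frac{1}{f_1}$, $\frac{f_{14}}{f_7^2}\cdot f_1^3f_2^3$, and $\frac{f_{22}}{f_{11}}\cdot\frac{1}{f_2}$ modulo $5$, $7$, $11$, reducing to Ramanujan's congruences for $p(n)$ and the $7$-dissection of Lemma \ref{Jlemma}; your explicit residue bookkeeping (including why the class $6$ drops out of $f_1^3$, since $2k+1\equiv 0 \pmod 7$ when $k\equiv 3$) spells out what the paper states only as ``no combination yields a term with exponent $7n+4$.''
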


\begin{proof}
Note that 
    \begin{align*}
        \sum_{n\geq 0} a_{11}q^n &= \frac{f_2^{10}}{f_1^{11}} = \frac{f_2^{10}}{f_1^{10}} \cdot \frac{1}{f_1} \\
        &= \frac{f_2^{10}}{f_1^{10}} \cdot \sum_{n\geq 0}p(n)q^n
    \end{align*}
Since $\displaystyle\frac{f_2^{10}}{f_1^{11}}$ is a power series in $q^5$, it suffices to check that $p(5n+4) \equiv 0 \pmod{5}$. Thus the congruence follows from the Ramanujan congruence for $p(n)$. Also,

\begin{align*}
         \sum_{n\geq 0} a_{11}q^n &=\frac{f_2^{10}}{f_1^{11}} = \frac{f_2^7}{f_1^{14}} \cdot f_1^3f_2^3 \\
         &\equiv \frac{f_{14}}{f_7^{2}} \cdot (\mathcal{J}_0(q^7)+q\mathcal{J}_1(q^7)+q^3\mathcal{J}_3(q^7))\cdot(\mathcal{J}_0(q^{14})+q^2\mathcal{J}_1(q^{14})+q^6\mathcal{J}_3(q^{14})) \pmod{7},  \\
\end{align*}
    where the last line comes from Lemma \ref{Jlemma}. As $\displaystyle\frac{f_{14}}{f_7^{2}}$ is a power series in $q^7$, it suffices to check that the congruence is satisfied by the rest of the product. Note that in the above dissection, there is no combination that yields a term with exponent $7n+4$, proving the second congruence. Finally,
\begin{align*}
         \sum_{n\geq 0} a_{11}q^n &=\frac{f_2^{10}}{f_1^{11}} = \frac{f_2^{11}}{f_1^{11}f_2} \\
         &= \frac{f_2^{11}}{f_1^{11}} \sum_{n\geq 0} p(n)q^{2n}
\end{align*}
Again, as $\displaystyle\frac{f_2^{11}}{f_1^{11}}$ is a power series in $q^{11}$, it suffices to show the congruence holds for $\displaystyle\sum_{n\geq 0} p(n)q^{2n}$. Hence, we need to know when the exponent of $q$ in the sum is both $1 \pmod{11}$ and $0 \pmod{2}$. By the Chinese Remainder Theorem, this occurs when the exponent of $q$ is $12 \pmod{22}$, coinciding with $p(11n+6)$, which is the Ramanujan congruence mod 11 for $p(n)$.
\end{proof} 

It is likely the case that one could prove Ramanujan congruences for other $a_k$, $k \geq 0$. We conclude with a few questions that are of great interest to the author.
\begin{enumerate}
    \item Is there a nice explanation as to why some $\alpha < 81$ are missing from the list in Theorem \ref{mainthm}?
    \item Numerically, it would seem that the internal congruences offered by Lemma \ref{neccessarymodforms} stop around $\alpha = 70$. Is there a way to infinitely extend these congruence families for select $\alpha >0$?
    \item Do the Ramanujan congruences satisfied by $a_{11}(n)$ belong to an infinite family for other Ramanujan congruences satisfied by the various $a_{k}$'s?
\end{enumerate}

\newpage
\bibliographystyle{alpha}
\bibliography{refs}

\end{document}